\newtheorem{thm}{Theorem}
\newtheorem{defn}{Definition}
\newtheorem{corr}{Corollary}
\newtheorem{lemm}{Lemma}
\begin{document}
\definecolor{white}{rgb}{1,1,1}
\definecolor{blue}{rgb}{0,0,1}
\definecolor{uuuuuu}{rgb}{0,0,0}
\definecolor{qqqqff}{rgb}{0.,0.,1.}
\title{Identifying combinatorially symmetric Hidden Markov Models}
\author{Daniel Klaus Burgarth}
\address{Department of Mathematics, Aberystwyth University, Aberystwyth SY23 3BZ, UK}
\date{\today}
\begin{abstract}
We provide a sufficient criterion for the unique parameter identification of combinatorially symmetric Hidden Markov Models based on the structure of their transition matrix. If the observed states of the chain form a zero forcing set of the graph of the Markov model then it is uniquely identifiable and an explicit reconstruction method is given.\end{abstract}
\maketitle
\section{Introduction}
Hidden Markov Models are a major workhorse of speech recognition, gene analysis, computational finance and, more generally, machine learning~\cite{hmm}. The key idea is to consider a very simple Markov chain as an underlying model, but only allow for a restricted set of observations, while the remaining part of the dynamics remains hidden. Such models naturally lead to a vast variety of statistical questions, such as the estimation of the initial state, prediction of future events, or identification of the parameters of the underlying dynamics.

In terms of learning the parameters of a Hidden Markov Model, powerful algorithms such as the Baum-Welch algorithm~\cite{hmm} have been developed. These algorithms are statistical in nature and iteratively maximise likelihood functions. The aim of the present paper is to approach parameter identification in such models from a much more elementary perspective. Namely, we set the problem not as a statistical, but as a matrix problem, and ask the question: given a Hidden Markov Model with a certain structure, how many states do we need to observe such that its parameters are uniquely determined?

The type of inverse problem we consider is inspired by the work of Gladwell~\cite{gladwell}, and analogous to parameter identification in quantum physics~\cite{quantum}. In the context of linear Markov chains, questions similar in spirit were discussed in~\cite{gzyl}, were it was shown that certain `local' perturbations in infinite chains are uniquely identified by their impact on hitting times. We consider a simpler problem on finite chains, but for arbitrary topology of the Markov model.

We find that a sufficient criterion for the unique identifiability can be easily given in terms of the structure of the matrix of transition probabilities/rates of the discrete/continuous Markov chain, provided this matrix is combinatorially symmetric. Namely, if the set of states we observe is a zero forcing set~\cite{hogben} of the simple graph which describes the non-zero pattern of the matrix, then the Hidden Markov Model can be uniquely identified. This holds for discrete (Theorem 1) as well as for continuous (Theorem 2) chains. The proof is constructive and therefore provides an easy-to implement algorithm for the identification, although we do not consider the statistical properties of the problem and in particular the efficiency when only limited information is provided.

\section{Setup}
Throughout this article we consider homogeneous and  finite Markov chains with states $s=1,\ldots,S$. To begin, let us consider discrete-time Markov chains with states $X_n$  described by a right stochastic transition matrix $P$. In particular, the transition probability $\mathbb{P}(X_{n+1}=j|X_{n}=i)$ is given by the matrix element $(P)_{ij}\ge0$ of $P$, independently of $n$. $P$ is row stochastic with $\sum_{j=1}^S (P)_{ij}=1$. We will refer to properties of the Markov chain and its transition matrix $P$ interchangeably. A central property of matrices which we use in this article is combinatorial symmetry~\cite{comb}:
\begin{defn}[Combinatorial symmetry]
A matrix $M$ is combinatorially symmetric if and only if for all indices $i,j$, $(M)_{ij}\neq0$ implies $(M)_{ji}\neq0.$
\end{defn}
The advantage of the combinatorical symmetry is that we can describe the non-zero pattern of $M$ by an \emph{undirected} graph. For the purpose of this work, we ignore the diagonal entries of $M$ when constructing this graph such that it has no self-edges: 
\begin{defn}\label{graphofmatrix}
For a combinatorially symmetric matrix $M$ of order $S$, define a simple graph $\mathcal{G}(M)=(V,E)$ with $S$ vertices. The edges are given by the non-zero pattern of the off-diagonal elements of $M$, 
$\{i,j\}\in E \Leftrightarrow (M)_{ij}\neq 0, i<j$.
\end{defn}
Another central concept in this article is zero forcing: \begin{defn}[Zero Forcing \cite{zeroforcing1, zeroforcing2}]
Consider a subset $Z$ of vertices on a simple graph which are initially coloured blue. Let the colour propagate on the graph according to the following rules: 1) if a vertex is blue, it remains blue; and 2 ) if a blue vertex $i$ has exactly one non-blue neighbour $j$, this neighbour becomes blue. If eventually all vertices turn blue, we call the original set $Z$ a zero forcing set.
\end{defn}
For an example, consider Fig. 1. Furthermore, for any simple graph, we can consider a minimal zero forcing set $Z$ such that for all forcing sets $Z'$, $|Z|\le|Z'|$. The number of elements in that set is called the zero forcing number of the graph, $Z\mathcal(G)$. It is easy to see that a path has forcing number $1$, and a lattice of size $n\times m, n\le m$, has forcing number $n$. For sparse graphs, the zero forcing number tends to be much smaller than the order. 
\section{Main result}
We consider the case of an endlessly evolving finite Markov chain `observed' on a certain set $A$ of states. That is, at a given time $n$, we may check if the chain is on any state within $A$, and if it is, we know in which one. If it isn't, we only know that it is in one of the hidden states outside $A$, without knowing in which one. When discussing this setup we are not worried about statistical efficiency of estimation, but only about what could be estimated uniquely, in principle, provided enough samples are taken. This leads us to 
\begin{lemm}\label{observing}
If a set $A$ of states of a finite discrete time Markov chain can be observed, and $\mathcal{G}(P)$ is connected, then the corresponding dynamics $(P^n)_{ij}$ can be estimated arbitrarily well for all $i,j\in A$ and for all $n\in\mathbb{N}$.
\end{lemm}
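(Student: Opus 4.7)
The plan is to reduce the estimation of $(P^n)_{ij}$ for $i,j\in A$ to counting certain observable events and then to invoke the ergodic theorem for finite Markov chains. The conceptual point is that the conditional probability $(P^n)_{ij}=\mathbb{P}(X_{m+n}=j\mid X_m=i)$ is a ratio of two quantities both of which are \emph{visible} under the observation of $A$, so that no information about the hidden transitions at intermediate times is actually needed.

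First I would use the hypothesis that $\mathcal{G}(P)$ is connected, combined with the standing combinatorial symmetry of $P$, to conclude that the Markov chain is irreducible. Since the state space is finite, irreducibility yields positive recurrence together with a unique stationary distribution $\pi$ with $\pi(s)>0$ for every state $s$. Next, for fixed $i,j\in A$ and $n\in\mathbb{N}$, I would define the observable counts
$$N_i(T)=\#\{m\le T:X_m=i\},\qquad N^{(n)}_{ij}(T)=\#\{m\le T:X_m=i,\,X_{m+n}=j\}.$$
Both are directly computable from the observations because the events $\{X_m=i\}$ and $\{X_{m+n}=j\}$ only involve states in $A$; the unobserved intermediate states $X_{m+1},\ldots,X_{m+n-1}$ play no role in evaluating the indicator functions defining these counts.

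The next step is to apply the ergodic theorem for irreducible finite Markov chains to the functions $f(x)=\mathbf{1}_{\{x=i\}}$ and $g(x_0,\ldots,x_n)=\mathbf{1}_{\{x_0=i,\,x_n=j\}}$, the second regarded as an observable of the lifted stationary chain $(X_m,X_{m+1},\ldots,X_{m+n})$. This yields, almost surely and irrespective of the initial distribution,
$$\frac{N_i(T)}{T}\longrightarrow\pi(i),\qquad \frac{N^{(n)}_{ij}(T)}{T}\longrightarrow\pi(i)\,(P^n)_{ij}.$$
Since $\pi(i)>0$, the quotient $N^{(n)}_{ij}(T)/N_i(T)$ converges almost surely to $(P^n)_{ij}$, which is exactly the desired estimator.

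The only potential obstacle is technical: one has to make sure the ergodic statements above hold for an arbitrary initial distribution and in particular do not require aperiodicity, and that the conditioning $\{X_m=i\}$ inside a non-stationary chain does not spoil the ratio argument. Both are standard consequences of the finite, irreducible setting once the Birkhoff theorem is applied at the level of the joint process, so no nontrivial probabilistic input beyond the classical ergodic theorem is needed.
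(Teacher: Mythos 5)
Your argument is correct and establishes the lemma, but it is organized differently from the paper's proof, which is a two-line regeneration argument: by irreducibility (following, as you note, from connectedness of $\mathcal{G}(P)$ together with the standing combinatorial symmetry) the chain visits $i$ infinitely often; at each such visit one waits $n$ steps and records whether the chain is at $j$, which by the (strong) Markov property yields a sequence of independent Bernoulli trials with success probability $(P^n)_{ij}$, so the ordinary i.i.d.\ law of large numbers finishes the job. You instead form the empirical ratio $N^{(n)}_{ij}(T)/N_i(T)$ over \emph{all} times up to $T$ and invoke the ergodic theorem for the lifted chain $(X_m,\ldots,X_{m+n})$ to identify the limits of numerator and denominator as $\pi(i)(P^n)_{ij}$ and $\pi(i)$ respectively. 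Both routes rest on the same two ingredients --- recurrence of a finite irreducible chain, and the observation that the events $\{X_m=i\}$ and $\{X_{m+n}=j\}$ are visible without knowing the hidden intermediate states --- but yours produces the estimator one would actually implement (using every time step) at the cost of heavier machinery, namely the Markov-chain ergodic theorem together with the points about periodicity, arbitrary initial laws, and irreducibility of the lifted chain on its accessible states, all of which you correctly flag as standard; the paper's sequential-sampling version avoids all of this and needs only the strong Markov property and the i.i.d.\ law of large numbers.
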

\begin{proof}
Since the Markov chain is irreducible, by observing state $i$ we are guaranteed that eventually we well find $X_k=i$ for some $k$. We then wait for time $n$ and measure state $j$, thereby sampling the probability above. We repeat until we have a precise enough estimate.
\end{proof}
Setting $n=1$ in the Lemma implies that $(P)_{ij},\; i,j\in A$ is fixed by the observations in $A$. While this is rather obvious-- after all, the states are directly observed-- the interesting question is what can be inferred \emph{indirectly} about neighbouring nodes. The following Lemma provides a crucial step.

\begin{lemm}\label{inductionstep}
Consider a set of states $A$ of a  Markov chain. Assume $(P^n)_{ij}$ is known for all $i,j\in A$ and for all $n\in\mathbb{N}$. Let $k\in A$ and $\ell \in \overline{A}=V\backslash A$ be states such that $\ell$ is the only incoming and only  outgoing neighbor of $k$ in $\overline{A}$ (so $(P)_{\ell k}\neq 0, (P)_{k \ell}\neq 0$ and $j\in \overline{A},(P)_{j k}\neq 0\Rightarrow j=\ell$ and $
j\in \overline{A},(P)_{k j}\neq 0\Rightarrow j=\ell$). Then we can infer $(P^n)_{ij}$ for all $i,j\in A\cup \{\ell\}$ and for all $n\in\mathbb{N}$ uniquely.
\end{lemm}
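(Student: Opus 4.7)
The plan is to exploit the hypothesis that $\ell$ is the unique hidden neighbour of $k$: any transition at $k$ entering or leaving $\overline{A}$ must pass through $\ell$, so unknown quantities involving $\ell$ should be recoverable from observed quantities at $k$. I would reconstruct the missing data in three stages, each feeding into the next.

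First, I would pin down the two one-step entries $(P)_{k\ell}$ and $(P)_{\ell k}$. Row stochasticity, together with the fact that row $k$ has no nonzero entry in $\overline{A}$ except at column $\ell$, gives $(P)_{k\ell}=1-\sum_{j\in A}(P)_{kj}$ directly. Then expanding
\[
(P^2)_{kk}=\sum_{m\in A}(P)_{km}(P)_{mk}+(P)_{k\ell}(P)_{\ell k},
\]
where all hidden terms in the inner sum vanish except $m=\ell$, and dividing by $(P)_{k\ell}\neq 0$ identifies $(P)_{\ell k}$.

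Second, I would recover the row and column of $\ell$ at every power. For $i\in A$ and $n\in\mathbb{N}$, the identity
\[
(P^{n+1})_{ik}=\sum_{m\in A}(P^n)_{im}(P)_{mk}+(P^n)_{i\ell}(P)_{\ell k}
\]
uses that $\ell$ is the only hidden $m$ with $(P)_{mk}\neq 0$; dividing by $(P)_{\ell k}\neq 0$ yields $(P^n)_{i\ell}$. The symmetric expansion of $(P^{n+1})_{kj}$ for $j\in A$, together with $(P)_{km}=0$ for hidden $m\neq\ell$, determines $(P^n)_{\ell j}$ analogously.

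Only $(P^n)_{\ell\ell}$ then remains. For this I would sandwich $P^n$ between two factors of $P$ at $k$, writing
\[
(P^{n+2})_{kk}=\sum_{m_1,m_2}(P)_{km_1}(P^n)_{m_1 m_2}(P)_{m_2 k}.
\]
The boundary factors force $m_1,m_2\in A\cup\{\ell\}$, and every term except the one with $m_1=m_2=\ell$ already involves an entry of $P^n$ obtained in the previous step; the remaining coefficient $(P)_{k\ell}(P)_{\ell k}$ is nonzero by hypothesis, so $(P^n)_{\ell\ell}$ is isolated. The main subtlety is not any individual algebraic manipulation but the staged ordering: the sandwich identity for the diagonal entry is only useful after the $\ell$-row and $\ell$-column at every power have been filled in, and this in turn requires the one-step identification of $(P)_{k\ell}$ and $(P)_{\ell k}$ to be already complete.
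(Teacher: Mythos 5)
Your proposal is correct and follows essentially the same route as the paper: row stochasticity pins down $(P)_{k\ell}$, and Chapman--Kolmogorov expansions at $k$ isolate the single unknown term involving $\ell$ at each stage, with the nonzero coefficients $(P)_{k\ell}$, $(P)_{\ell k}$ allowing division. Your minor variations (extracting $(P)_{\ell k}$ from $(P^2)_{kk}$ rather than from the column recursion at $n=1$, and using the sandwich $(P^{n+2})_{kk}$ instead of $(P^{n+1})_{\ell k}$ for the diagonal entry) are equivalent in substance.
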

\begin{proof}Because of row stochasticity and $\ell$ being the only element in $\overline{A}$ with $(P)_{kj}\neq 0$, we have \begin{equation}\label{norm}1=\sum_j (P)_{kj}=\sum_{j\in A}(P)_{kj}+(P)_{k\ell}\end{equation}The first term is known by assumption, so Equation~\ref{norm} determines $(P)_{k\ell}$. Furthermore, for any $i \in A$, we have through Chapman-Kolmogorov that for all $n$,
\begin{align}
(P^{n+1})_{ki}&=\sum_j (P)_{kj}(P^{n})_{ji}\\
&=\sum_{j\in A} (P)_{kj}(P^{n})_{ji}+(P)_{k \ell}(P^{n})_{\ell i},
\end{align}
where the left hand side and the first term of the right hand side are already determined, hence fixing $(P^n)_{\ell i}.$ Similarly, using that $(P)_{\ell k}$ is the only non-zero term connecting $k$ with $\overline{A}$, we consider for all $n$ and for all $i \in A$
\begin{align}
(P^{n+1})_{ik}&=\sum_j (P^n)_{ij}(P)_{jk}\\
&=\sum_{j\in A}(P^n)_{ij}(P)_{jk}+(P^n)_{i\ell} (P)_{\ell k}
\end{align}
providing us with $(P^n)_{i\ell}$. Finally, for all $n$,
\begin{align}
(P^{n+1})_{\ell k}&=\sum_j (P^n)_{\ell j}(P)_{jk}\\
&=\sum_{j\in A}(P^n)_{\ell j}(P)_{jk}+(P^n)_{\ell \ell} (P)_{\ell k},
\end{align}
where $(P^n)_{\ell \ell}$ is the only unknown. Concluding, we know $(P^n)_{i j}$ on $A\cup \{\ell\}$.
\end{proof}

\setcounter{thm}{0}
We are now ready to prove the main theorem.
\begin{thm}\label{discrete}
Consider combinatorially symmetric discrete-time Markov chain $P$ with connected graph $\mathcal{G}(P)$. If the observable set of vertices on $\mathcal{G}(P)$ is zero forcing, then the Markov chain is uniquely determined through the dynamics of the observed states.
\end{thm}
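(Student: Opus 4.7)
The plan is to translate the zero forcing process on $\mathcal{G}(P)$ into a sequence of applications of Lemma~\ref{inductionstep}. First I would invoke Lemma~\ref{observing} to secure the base case: knowledge of $(P^n)_{ij}$ for every $n$ and every $i,j$ in the initially observed (blue) set $A$.

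The key observation is that the hypothesis of Lemma~\ref{inductionstep} is, thanks to combinatorial symmetry, precisely the zero forcing rule. By Definition~\ref{graphofmatrix}, adjacency of $j$ and $k$ in $\mathcal{G}(P)$ is equivalent to $(P)_{jk}\neq 0$, and combinatorial symmetry makes the in- and out-neighborhoods of $k$ coincide with its neighborhood in $\mathcal{G}(P)$. Hence the statement ``$\ell$ is the unique in- and out-neighbor of $k$ lying in $\overline{A}$'' is equivalent to ``the blue vertex $k$ has exactly one non-blue neighbor $\ell$.''

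I would then run an induction along a forcing sequence $k_1\to\ell_1,\ldots,k_r\to\ell_r$ witnessing that $A$ is zero forcing. Writing $A_t:=A\cup\{\ell_1,\ldots,\ell_{t-1}\}$, the inductive hypothesis is that $(P^n)_{ij}$ is known for every $i,j\in A_t$ and every $n\in\mathbb{N}$. Since $k_t\in A_t$ forces $\ell_t$, meaning $\ell_t$ is its unique neighbor in $\overline{A_t}$, an application of Lemma~\ref{inductionstep} with set $A_t$ and states $k_t,\ell_t$ extends the knowledge to $A_{t+1}$. After $r=|V|-|A|$ steps, $A_{r+1}=V$, and setting $n=1$ recovers $P$ itself uniquely.

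I expect the only real subtlety to be maintaining the inductive invariant: at step $t$ I must verify that the uniqueness of $\ell_t$ as a neighbor of $k_t$ outside $A_t$ is precisely what the forcing sequence guarantees (not merely what held at step $1$), and that the directional hypothesis of Lemma~\ref{inductionstep} follows from the undirected forcing rule via combinatorial symmetry. Both are immediate from the definitions, so I anticipate no genuine obstacle beyond bookkeeping.
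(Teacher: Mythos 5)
Your proposal is correct and follows exactly the paper's argument: apply Lemma~\ref{observing} for the base case, then iterate Lemma~\ref{inductionstep} along a forcing sequence, using combinatorial symmetry to identify the undirected forcing rule with the in/out-neighbor hypothesis of the lemma. The paper states this in one sentence; your version merely fills in the bookkeeping.
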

\begin{proof}
We apply Lemma \ref{observing} to the initial set $A$ and increase it inductively using zero forcing and combinatoric symmetry in Lemma \ref{inductionstep} until all vertices are included.
\end{proof}
\begin{corr}
The zero forcing number of a combinatorially symmetric discrete-time Markov chain $P$ with connected graph $\mathcal{G}(P)$ is an upper bound to the minimal number of observed states which uniquely determine it.
\end{corr}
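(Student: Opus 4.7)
The plan is to derive the Corollary directly from Theorem~\ref{discrete} together with the definition of the zero forcing number. By definition, $Z(\mathcal{G}(P))$ is the cardinality of a minimum zero forcing set of $\mathcal{G}(P)$, and this minimum is attained; so I would begin by fixing an explicit zero forcing set $Z^{\ast}\subseteq V$ with $|Z^{\ast}|=Z(\mathcal{G}(P))$.

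With such a set in hand, I would then apply Theorem~\ref{discrete} to the observable set $A:=Z^{\ast}$. Since $\mathcal{G}(P)$ is connected by hypothesis and $A$ is zero forcing on $\mathcal{G}(P)$ by construction, Theorem~\ref{discrete} asserts that the transition matrix $P$ is uniquely determined by the dynamics observed on $A$. This exhibits one concrete observable set, of size exactly $Z(\mathcal{G}(P))$, from which $P$ can be uniquely recovered; therefore the minimum cardinality over all observable sets that uniquely determine $P$ is at most $Z(\mathcal{G}(P))$, which is precisely the desired upper bound.

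There is no genuine obstacle in this argument: once Theorem~\ref{discrete} is in place, the Corollary is a direct specialisation of it to a zero forcing set of minimum size, combined with the definition of $Z(\mathcal{G}(P))$. The only hypothesis one needs to verify when invoking Theorem~\ref{discrete} is the connectedness of $\mathcal{G}(P)$, and this is already assumed in the statement of the Corollary, so nothing further is required.
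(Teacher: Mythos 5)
Your argument is correct and is exactly the intended one: the paper leaves the corollary without an explicit proof because it is precisely this direct specialisation of Theorem~\ref{discrete} to a minimum zero forcing set. Nothing is missing.
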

\section{Discussion of requirements}
In order to obtain necessary and sufficient criteria for the inverse problem provided, let us develop another perspective. For a given simple graph, consider the combinatorially symmetric Markov chains which provide such graph. The observed $(P^n)_{ij},\; i,j\in A$ are polynomials in the unknown elements of $P$. For example, for a path on three vertices,
\begin{equation}P=\begin{pmatrix} P_{11} & P_{12} & 0\\
P_{21} & P_{22} & P_{23}\\
0 & P_{32} & P_{33}
\end{pmatrix},\end{equation}
we obtain
\begin{align}\label{poly}
(P)_{11}&= P_{11}\\
(P^2)_{11}&= P_{11}^2+P_{12} P_{21}\\
(P^3)_{11}&= P_{11}^3+2 P_{11} P_{12} P_{21} + P_{12}P_{21}P_{22}\\
&\cdots \nonumber \\
(P^n)_{11}&=\cdots. \nonumber
\end{align}
The question of unique identification of the Markov chain can then be rephrased into the unique solvability of the above system of polynomial equations (combined with the row normalisation providing further equations). For larger systems this quickly becomes intractable, but it allows us to discuss some of the conditions posed above. Firstly, we note that the positivity of the non-zero entries was never used in the previous sections. It is therefore interesting to ask if one can apply Theorem \ref{discrete} for matrices without row normalisation? The above example of the path gives an immediate counterexample, because $P_{21}$ and $P_{12}$ always occur as a product, in any order $n$, and cannot be separated without another equation.

Furthermore, an important assumption in the above is the combinatorial symmetry of the Markov chain. What happens if this is not the case? As an example, consider the directed cycle on $S$ vertices. The zero forcing condition can easily be extended to directed graphs giving the directed cycle a forcing number of $1$. Setting up the polynomials as above, we obtain
\begin{align}
(P)_{11}&=0\\
(P^2)_{11}&=0\\
&\cdots\\
(P^S)_{11}&=\prod_{i=1}^{S-1} P_{i\;i+1}\\
(P^{S+1})_{11}&=\prod_{i=1}^{S-1} P_{i\;i+1}\sum_{i=1}^{S} P_{ii}\\
&\cdots.\nonumber
\end{align}
It is clear by symmetry that $P$ is not uniquely determined. We conjecture that the minimum number of vertices to be observed is actually $S-1$. Therefore, an analogous version of Theorem~\ref{discrete}, replacing combinatorial symmetry with a directed graph and generalising zero forcing along those lines, does not hold.

\begin{figure}
\centering
\begin{tikzpicture}[line cap=round,line join=round,>=triangle 45,x=0.2cm,y=0.2cm]
\clip(-17.,-14.) rectangle (22.,25.);
\draw [line width=1.2pt] (-5.268055785962234,6.373759434988376)-- (2.82211415778724,12.251611957913104);
\draw [line width=1.2pt] (2.82211415778724,12.251611957913104)-- (10.912284101536711,6.373759434988372);
\draw [line width=1.2pt] (10.912284101536711,6.373759434988372)-- (7.822114157787238,-3.1368057279631607);
\draw [line width=1.2pt] (-2.1778858422127616,-3.1368057279631607)-- (7.822114157787238,-3.1368057279631607);
\draw [line width=1.2pt] (-2.1778858422127616,-3.1368057279631607)-- (-5.268055785962234,6.373759434988376);
\draw [line width=1.2pt] (2.82211415778724,12.251611957913104)-- (2.7912554280289386,22.25156434473964);
\draw [line width=1.2pt] (10.912284101536711,6.373759434988372)-- (20.421268586471918,9.468789978498645);
\draw [line width=1.2pt] (7.822114157787238,-3.1368057279631607)-- (13.704801790334777,-11.22346055043903);
\draw [line width=1.2pt] (-2.1778858422127616,-3.1368057279631607)-- (-8.082785099457986,-11.207255824567735);
\draw [line width=1.2pt] (-5.268055785962234,6.373759434988376)-- (-14.77417089703853,9.477591823337802);
\begin{scriptsize}
\draw [fill=white] (-2.1778858422127616,-3.1368057279631607) circle (2.5pt);
\draw[color=uuuuuu] (-1.822222242469325,-1.5637936121808043) node {7};
\draw [fill=white] (7.822114157787238,-3.1368057279631607) circle (2.5pt);
\draw[color=uuuuuu] (9.358096886124487,-2.3260880982212915) node {8};
\draw [fill=white] (10.912284101536711,6.373759434988372) circle (2.5pt);
\draw[color=uuuuuu] (11.22148340755679,7.92253776965637) node {4};
\draw [fill=white] (2.82211415778724,12.251611957913104) circle (2.5pt);
\draw[color=uuuuuu] (3.5138391598140855,13.427997946615445) node {3};
\draw [fill=white] (-5.268055785962234,6.373759434988376) circle (2.5pt);
\draw[color=uuuuuu] (-4.956099573969106,7.92253776965637) node {2};
\draw [fill=white] (2.7912554280289386,22.25156434473964) circle (2.5pt);
\draw[color=uuuuuu] (3.0903422231249262,23.84602258916877) node {6};
\draw [fill=white] (20.421268586471918,9.468789978498645) circle (2.5pt);
\draw[color=uuuuuu] (20.792514176731796,11.056415101156151) node {5};
\draw [fill=blue] (13.704801790334777,-11.22346055043903) circle (2.5pt);
\draw[color=uuuuuu] (14.016563189705243,-9.694934796612669) node {10};
\draw [fill=blue] (-8.082785099457986,-11.207255824567735) circle (2.5pt);
\draw[color=uuuuuu] (-7.751179356117559,-9.610235409274837) node {9};
\draw [fill=white] (-14.77417089703853,9.477591823337802) circle (2.5pt);
\draw[color=uuuuuu] (-14.44243095580628,11.056415101156151) node {1};
\end{scriptsize}
\end{tikzpicture}
\caption{The penta-sun $H_5$~\cite{hogben}. The zero forcing number is $3$, and an example of a forcing set is $\{9,10,1\}$. If we only observe the smaller set $\{9,10\}$ (blue), we can use Lemma~\ref{inductionstep} to obtain $(P^{n})_{ik}$ for $i,k \in \{2,4,7,8,9,10\}$ and all $n$. }
\end{figure}
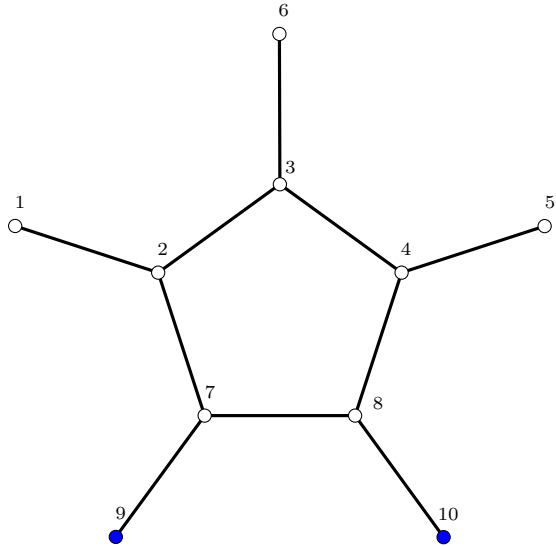

Another natural question is if there are combinatorially symmetric Markov chains which can be identified from a number of nodes smaller than their zero forcing number. This would be analogous to the relationship between zero forcing and maximum nullity (see, e.g.,~\cite{hogben}), where it is known that the forcing number is an upper bound to the nullity, but that this bound is not always tight. We conjecture that the same happens here, and a natural candidate to test would be one for which the maximum nullity is unequal to the forcing number, such as $H_5$ (compare Fig. 1), which has a forcing number of $3$ and a maximum nullity of $2$. We conjecture that the corresponding Markov chain can be uniquely identified using nodes $\{9,10\}$ and will provide some evidence towards this conjecture. Firstly, using Lemma~\ref{inductionstep} we see that we can obtain  $(P^{n})_{ik}$ for $i,k \in \{2,4,7,8,9,10\}$ and all $n$ easily. This leaves us with the $14$ unknown parameters $P_{11}, P_{12}, P_{21}, P_{23}, P_{32}, P_{33}, P_{36}, P_{63}, P_{66}, P_{34}, P_{43}, P_{45}, P_{54}, P_{55}.$ By using row-stochasticity we may eliminate $P_{12}, P_{23}, P_{36}, P_{45}, P_{55}, P_{66}$ to be left with $8$ unknowns. We claim that these can be obtained from $(P^{2})_{22}, (P^{2})_{44}, (P^{2})_{24}, (P^{2})_{42},$ $(P^{3})_{22}, (P^{3})_{44}, (P^{3})_{42},$ and $(P^{4})_{22}$. Expanding the matrix powers in terms of the matrix elements like in Equation \ref{poly}, we obtain a set of 8 polynomial equations (which are, however, too complex to be worthwhile writing down explicitly here). For example, $(P^{4})_{22}$ is linear in $P_{63}$ and the only equation involving this parameter, which leaves us with $7$ remaining equations and unknowns. $(P^{2})_{42}$ and $(P^{3})_{42}$ can be combined to obtain
\begin{equation}
	P_{33}=\left[(P^{3})_{42}-(P^{2})_{42} P_{22} -(P^{2})_{42} P_{44}-P_{48}P_{72}P_{78}\right]/(P^{2})_{42}.
\end{equation}
The equations from $(P^{2})_{22},(P^{2})_{44},(P^{2})_{24}$ and $(P^{2})_{42}$ are linear in $P_{11},P_{54},P_{34}$ and $P_{43}$ respectively and can thus eliminate those variables. This leaves us with two equations originating from $(P^{3})_{22}$ and $(P^{3})_{44}$ and two unknown $P_{32}$ and $P_{21}$. These equations are rather complex in their coefficients but quartic in the variables (and quadratic in $P_{32}$ and $P_{21}$, respectively). Usually this equation system would have $4$ solutions, while for certain degenerate cases it might have infinitely many. What makes it hard to analyse these equations is that we found examples with several feasible solutions for $P_{32}$ and $P_{21}$, which nonetheless all  lead to a unique feasible solution for $P$ (e.g, a valid stochastic matrix $P$).  While we were unable to prove in general that the whole set of equations has a unique feasible solution, we have checked $10000$ randomly created Markov chains with the graph $H_5$ and found via exact computer algebra that they had only one feasible solution. This leads us to conjecture that this chain is identifiable from two sites (at the least, the numerical analysis shows that most systems with this graph are uniquely identifiable). One may therefore speculate that the identifiability is related more closely to the maximum nullity after all.

Perhaps this discussion highlights another point- checking identifiability is a very complicated subject of algebraic geometry in general, and even if the system might be uniquely solvable, finding this solution is hard, both analytically and numerically. On the other hand, Zero Forcing is easy to check, and the inductive protocol for identification above is constructive and easy to implement.

\section{Continuous time Markov chains}
A naive way to consider the case of continuous Markov chains would be to integrate their dynamics for some time $t$ and then refer to the discrete case. However, typically these discrete Markov chains will be described by the fully connected graphs and the above protocol would require to observe $S-1$ states. It is therefore advisable to attempt the analysis with the transition rate matrix $Q$ instead. It is well known that $Q$ has non-positive off-diagonals and normalised diagonal elements $(Q)_{ii}=\sum_{j\neq i}(Q)_{ij}$. Furthermore $P(t)=\exp (Qt)$ is the transition matrix of a Markov chain for all $t\in \mathbb{R}_{\ge 0}$.

We assume that $Q$ is combinatorially symmetric and define the graph of the continuous time Markov chain following Definition \ref{graphofmatrix} as $\mathcal{G}(Q)$. Provided this graph is connected, the analogous version to Lemma \ref{observing} is that for $i,j$ observable, $P(t)_{ij}$ can be estimated arbitrarily well for all times. Via \begin{equation}
	 \left. \frac{d^n}{dt^n}P(t)_{ij}\right|_{t=0} =(Q^n)_{ij}
\end{equation}
 we can perform the same iteration as in Lemma \ref{inductionstep} along a zero forcing set, with the row stochasticity replaced by $(Q)_{ii}=\sum_{j\neq i}(Q)_{ij}$. This leads us to the following 
\begin{thm}
Consider combinatorially symmetric continuous-time Markov chain $Q$ with graph $\mathcal{G}(Q)$. If the observable set of vertices on $\mathcal{G}(Q)$ is zero forcing, then the Markov chain is uniquely determined through the dynamics of the observed states.
\end{thm}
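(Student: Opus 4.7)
The plan is to run the discrete-time proof almost verbatim, with powers of $Q$ playing the role of powers of $P$ and the diagonal normalization of $Q$ replacing row stochasticity. The only step that is not a direct translation is upgrading Lemma~\ref{observing} to continuous time.

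First, I would establish the continuous analog of Lemma~\ref{observing}: for every $i,j\in A$ and every $n\in\mathbb{N}$, the quantity $(Q^n)_{ij}$ is recoverable from the observations. Observing the chain on $A$ lets us sample $P(t)_{ij}=(\exp(Qt))_{ij}$ to arbitrary accuracy for any fixed $t\ge 0$ by the argument of the discrete Lemma~\ref{observing}: wait until the chain is observed at state $i$ at some time $\tau$, check whether it is at $j$ at time $\tau+t$, and repeat. Since $t\mapsto P(t)_{ij}$ is real-analytic, knowing it on any interval determines all derivatives at $t=0$, and
\begin{equation*}
\left.\frac{d^n}{dt^n}P(t)_{ij}\right|_{t=0}=(Q^n)_{ij}
\end{equation*}
delivers every required entry of every power of $Q$ restricted to $A$.

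Second, I would prove the continuous analog of Lemma~\ref{inductionstep}. Let $k\in A$ have $\ell\in\overline{A}$ as its unique neighbour in $\overline{A}$ in $\mathcal{G}(Q)$. The $n=1$ data on $A$ fixes $(Q)_{kk}$ and every $(Q)_{kj}$ with $j\in A$, while the hypothesis on $k$ forces $(Q)_{kj}=0$ for every $j\in\overline{A}\setminus\{\ell\}$, so the diagonal normalization of $Q$ isolates $(Q)_{k\ell}$. With this coefficient known (and nonzero), the identity $(Q^{n+1})_{ki}=\sum_{j\in A}(Q)_{kj}(Q^n)_{ji}+(Q)_{k\ell}(Q^n)_{\ell i}$ becomes a linear equation in the single unknown $(Q^n)_{\ell i}$ for each $i\in A$. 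The mirror identity $(Q^{n+1})_{ik}=\sum_{j\in A}(Q^n)_{ij}(Q)_{jk}+(Q^n)_{i\ell}(Q)_{\ell k}$, combined with $(Q)_{\ell k}\neq 0$ from combinatorial symmetry, recovers $(Q^n)_{i\ell}$, and finally $(Q^{n+1})_{\ell k}=\sum_{j\in A}(Q^n)_{\ell j}(Q)_{jk}+(Q^n)_{\ell \ell}(Q)_{\ell k}$ fixes the last remaining entry $(Q^n)_{\ell\ell}$. This enlarges the fully known index set from $A$ to $A\cup\{\ell\}$.

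Third, I would chain this induction step along a zero-forcing sequence starting from the observed set $A$: each force identifies a $k\in A$ whose unique non-blue neighbour is some $\ell$, which is precisely the hypothesis of the induction lemma, so after finitely many applications every entry of $Q$ is determined. The main obstacle I anticipate lies entirely in the first step: in discrete time integer powers of $P$ are directly accessible, whereas here one must translate empirical sampling of $P(t)_{ij}$ into knowledge of the Taylor coefficients at $t=0$ via analyticity. Once that translation is in place, the remainder of the proof is a word-for-word copy of the discrete argument.
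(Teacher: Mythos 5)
Your proposal is correct and follows essentially the same route as the paper: estimate $P(t)_{ij}=(\exp(Qt))_{ij}$ on the observed set, convert to $(Q^n)_{ij}$ via $\left.\frac{d^n}{dt^n}P(t)_{ij}\right|_{t=0}=(Q^n)_{ij}$, and rerun the discrete induction of Lemma~\ref{inductionstep} along the zero forcing sequence with row stochasticity replaced by the diagonal normalization of $Q$. Your explicit appeal to real-analyticity to justify extracting the Taylor coefficients at $t=0$ from sampled values of $P(t)_{ij}$ is a detail the paper leaves implicit, but it is the same argument.
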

\section*{acknowledgments}
I acknowledge the feedback of Leslie Hogben, Bryan Shader and Tracy Hall and our enjoyable discussions at the AIM workshop `Zero forcing and its applications', as well as the AIM for providing the funding to attend this workshop. This work was supported by the EPSRC Grant No. EP/M01634X/1.

\end{document}